\theoremstyle{theorem}
\newtheorem{theorem}{Theorem}[section]
\theoremstyle{theorem}
\theoremstyle{theorem}
\newtheorem{proposition}[theorem]{Proposition}
\theoremstyle{remark}
\newtheorem{remark}[theorem]{Remark}
\theoremstyle{theorem}
\newtheorem{definition}[theorem]{Definition}
\theoremstyle{theorem}
\newcommand{\Hom}{\text{Hom}\,}
\newcommand{\Ext}{\text{Ext}\,}
\newcommand{\ob}{\text{ob}\,}
\newcommand{\Id}{\text{Id}}
\newcommand{\perf}{\text{Perf}\,}
\newcommand{\Stab}{\text{Stab}\,}
\newcommand{\Arg}{\text{Arg}\,}
\newcommand{\Supp}{\text{Supp}\,}
\newcommand{\prim}{\text{prim}}
\newcommand{\Li}{\text{Li}_2\,}
\newcommand{\ud}{\mathrm{d}}
\begin{document}
\title{Donaldson--Thomas invariants and wall--crossing formulas}
\author{Yuecheng Zhu}
\maketitle
\begin{abstract}
We introduce the Donaldson--Thomas invariants and describe the wall--crossing formulas for numerical Donaldson-Thomas invariants. 
\end{abstract}
\section{Introduction}
This is a brief introduction to the Donaldson--Thomas invariants and wall--crossing formulas, based on a talk given by the author at the Fields institute. The standard references are \cite{KS10}, \cite{KS08}, and \cite{KS11}. We will focus on a few basic definitions and ideas. It doesn't intend to be a comprehensive introduction to the vast program by Kontsevich--Soibelman. There are also many other works on the subject, e.g., Joyce--Song's program \cite{JS}, that we are not able to touch here.  \\

First, we want to give a glimpse of a much bigger picture that can not be included in this paper. The wall--crossing formulas (WCF) we will introduce are simply certain identities in the group of automorphisms of an algebraic torus. However, they are satisfied by a wide range of numerical invariants from very different problems. These numerical invariants are,
\begin{enumerate}
\item Donldson--Thomas (DT) invariants for $3$-Calabi--Yau categories,
\item Gromov--witten (GW) type invariants that are used in a scattering diagram,
\item some invariants produced from a complex integrable system. 
\end{enumerate}

There are other wall--crossing formulas originated from Physics. The invariants are the counting of BPS states in several different supersymmetric quantum systems. For example, in $d=2, \mathcal{N}=(2,2)$ theories, the WCF is called Cecotti--Vafa formula. See \cite{CV} and \cite{GMN}. These formulas are very closely related to DT invariants for $3$-Calabi--Yau categories. For a good introduction to the story, see the slides~\cite{Nslides1}. \\

We will only talk about the first case, DT invariants. The second invariants and scattering diagrams were originated from Kontsevich and Soibelman's work on mirror symmetry \cite{KS04}.  They are developed and applied in Gross--Siebert's program to reconstruct mirror families. It is worth noting that these Gromov--Witten type invariants look very different from DT invariants, and there are no stability conditions involved in this case. A good introduction is \cite{GPS}. The invariants from integrable systems are introduced in \cite{KS13}. The amazing fact is that all these invariants satisfy WCF. \\

The fact that the same (or similar) formula(s) appear in many different setups suggests that there is a common structure behind all these different setups. This is indeed the case. In \cite{KS13}, this structure is introduced, and is called the wall crossing structure (WCS). It is not hard to see that WCF and WCS are very central to enumerative geometry. \\

\subsection{Acknowledgement}
The author would like to thank the Fields institute for the hospitality. The author would also like to thank Andrew Neitzke for explaining the quadratic refinement, and the referee for important corrections and suggestions. 

\section{Donaldson--Thomas invariants}\label{1}
\subsection{$3$-Calabi--Yau categories}
This part is taken from \cite{Ke}
Fix a base field $k$. For any category (or $A_\infty$-category) $\mathcal{C}$, the set of objects is denoted by $\ob(\mathcal{C})$. For any $E,F \in \ob(\mathcal{C})$, the morphism from $E$ to $F$ is denoted by $\mathcal{C}(E,F)$. All categories are assumed to be $k$-linear. \\

For a Calabi--Yau $d$-fold $X$ over $k$, since the canonical bundle is trivial, the Serre duality gives a non-degenerate pairing 
\[
\Ext^i(\mathscr{E},\mathscr{F})\times \Ext^{d-i}(\mathscr{F},\mathscr{E})\to k,
\]

for all coherent sheaves $\mathscr{E}$ and $\mathscr{F}$ on $X$. \\

Let $\mathcal{D}^b(X)$ be the bounded derived category of coherent sheaves on $X$. It inherits a non-degenerate pairing from the above, which is denoted by $(\cdot,\cdot)$:
\[
\mathcal{D}^b(X)(\mathscr{E},\mathscr{F})\otimes \mathcal{D}^b(X)(\mathscr{F},\mathscr{E})\to k[-d], \quad  \mathscr{E},\mathscr{F}\in \mathcal{D}^b(X). 
\]

If $k=\mathbf{C}$, one considers the dg-model $\perf(X)$ of $\mathcal{D}^b(X)$. It is a thick triangulated subcategory generated by perfect complexes, i.e., those quasi-isomorphic to bounded complexes of finite rank vector bundles. It is a dg-module over the dg-algebra $\Omega^{0,*}(X)$. This category $\perf(X)$ for a Calabi--Yau $3$-fold $X$ is the model for the $3$-Calabi--Yau category we are interested in.\\

Let $\mathcal{T}$ be a triangulated $k$-category, which is Hom--finite, i.e., for any two objects $E,F\in\ob(\mathcal{T})$, the morphism space $\mathcal{T}(E,F)$ is a finite dimensional $k$-vector space. For a triangulated category $\mathcal{T}$, we can always assume the suspension functor $[1]$ is an automorphism, instead of just an auto equivalence. We always write a triangle functor as a pair $(S,\iota)$, where $\iota$ is the isomorphism of functors $S[1]\to [1]S$. For any $k$-vector space $V$, its dual space is denoted by $V^*$. 

\begin{definition}
A triangle functor $(S,\iota): \mathcal{T}\to \mathcal{T}$ is called a right Serre functor, if there exists a family of isomorphisms of functors (called the trace maps)
\[
t_E: \mathcal{T}(\cdot, SE)\to \mathcal{T}(E,\cdot)^*, 
\]

functorial in $E\in \mathcal{T}$, and that for all $E, F\in \mathcal{T}$, the following diagram commutes
\[
\begin{diagram}
\mathcal{T}([1]F,S[1]E)&\rTo^{t_{[1]E}}&&\mathcal{T}([1]E,[1]F)^*\\
\dTo^\iota&&&\dTo_{-[-1]^*}\\
\mathcal{T}([1]F,[1]SE)&\rTo^{[-1]}&\mathcal{T}(F,SE)&\rTo^{t_E}\mathcal{T}(E,F)^*.
\end{diagram}
\]

A right Serre functor is called a Serre functor, if it is an auto equivalence. In this case, we say $\mathcal{T}$ has the Serre duality. 
\end{definition}

If $X/k$ is a smooth projective variety of dimension $d$, and $\omega_X$ is the canonical sheaf of $X$, the functor
\[
S: \mathscr{F}\mapsto \mathscr{F}\otimes \omega_X[d]
\]

is a Serre functor for $\mathcal{D}^b(X)$ (or there exists a natural transformation $\iota$ such that $(S,\iota)$ is a Serre functor). This is the content of the usual Serre duality in algebraic geometry. So it is not hard to imagine the definition of a $d$-Calabi--Yau category. Basically, we want $S\cong [d]$. For any triangulated category $\mathcal{T}$, there is a natural antomorphism $([1], -\Id_{[2]})$, where $-\Id_{[2]}$ is the negative of the identity
\[
[2]=[1][1]\to [1][1]=[2]. 
\]

The negative sign is necessary to make it a triangle functor. For example, when $\mathcal{T}$ is the derived category of an abelian category,  the functor $[1]$ changes the differential of a chain complex from $\partial$ to $-\partial$. 

\begin{definition}
A triangulated $k$-category $\mathcal{T}$ is called a $d$-Calabi--Yau category, if it admits a Serre functor $(S,\iota)$ and there is an isomorphism of triangle functors, 
\[
(S,\iota)\cong ([1],-\Id_{[2]})^d. 
\]
\end{definition} 

The following proposition makes it more clear that the definition is the right generalization of $\mathcal{D}^b(X)$ for a Calabi--Yau $d$-fold $X/k$. 

\begin{proposition}
Suppose the triangulated $k$-category $\mathcal{T}$ admits a Serre functor. $\mathcal{T}$ is $d$-Calabi--Yau if and only if there is a family of linear forms
\[
t_E: \mathcal{T}(E,[d]E)\to k, \quad E\in\mathcal{T},
\]

such that for all objects $E$ and $F$, the induced pairing 
\begin{eqnarray*}
(\cdot,\cdot): &\mathcal{T}(E,F)\times \mathcal{T}(F,[d]E)\to k\\
&(f,g)\mapsto t_E(f\circ g),
\end{eqnarray*}

is non-degenerate, and for all morphisms $g: E\to [p]F$ and $f: F\to [q]E$ with $p+q=d$,
\[
t_E(([p]f)\circ g)=(-1)^{pq}t_F(([q]g)\circ f).
\]
\end{proposition}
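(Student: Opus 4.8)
The plan is to prove both implications by first repackaging the data of a Serre functor into a single family of linear forms, via the Yoneda lemma, and then matching the compatibility conditions. Throughout, write $(S,\iota)$ for the Serre functor $\mathcal{T}$ is assumed to carry. The preliminary observation, used in both directions, is that giving the natural isomorphisms $s_E\colon\mathcal{T}(\cdot,SE)\xrightarrow{\ \sim\ }\mathcal{T}(E,\cdot)^*$ of the definition is equivalent to giving linear forms $\sigma_E\colon\mathcal{T}(E,SE)\to k$: one sets $\sigma_E(g)=s_E(g)(\Id_E)$, and conversely, because $s_E$ must be natural in the source variable, $s_E(g)(f)=\sigma_E(g\circ f)$ for $g\in\mathcal{T}(F,SE)$ and $f\in\mathcal{T}(E,F)$. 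Under this dictionary, naturality of $s_E$ in the source variable is automatic, the statement that $s_E$ is an isomorphism for every $F$ becomes non-degeneracy of the pairing $(f,g)\mapsto\sigma_E(g\circ f)$, naturality of $s_E$ in $E$ becomes the extreme cases $p\in\{0,d\}$ of the asserted relation (once $SE$ is identified with $[d]E$), and the compatibility square with $\iota$ becomes the remaining cases. Everything then reduces to bookkeeping with these forms.

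For the forward implication, suppose $\mathcal{T}$ is $d$-Calabi--Yau and fix an isomorphism of triangle functors $\psi\colon(S,\iota)\xrightarrow{\ \sim\ }([1],-\Id_{[2]})^d$; in particular $\psi$ supplies an isomorphism $SE\cong[d]E$ natural in $E$, along which I transport $\sigma_E$ to obtain $t_E\colon\mathcal{T}(E,[d]E)\to k$. Non-degeneracy of the induced pairing is then immediate from the first paragraph. For the sign relation, I would first record, by a short induction on $d$, that the structure isomorphism $[d][1]\to[1][d]$ underlying $([1],-\Id_{[2]})^d$ is $(-1)^d$ times the identity of $[d+1]$; then, writing out the compatibility square of the definition and feeding in one shift at a time, one finds that for $f\colon F\to[q]E$ and $g\colon E\to[p]F$ with $p+q=d$ the net sign relating $t_E(([p]f)\circ g)$ to $t_F(([q]g)\circ f)$ is the product of these $\iota$-signs with the Koszul signs produced by commuting $[p]$ past $[q]$, and this product is exactly $(-1)^{pq}$.

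For the converse, suppose the family $t_E$ with the two stated properties is given. Define $\tilde t_E\colon\mathcal{T}(\cdot,[d]E)\to\mathcal{T}(E,\cdot)^*$ by $\tilde t_E(g)(f)=t_E(g\circ f)$; non-degeneracy says each $\tilde t_E$ is an isomorphism, and the extreme cases of the sign relation give precisely the naturality of $\tilde t_E$ in $E$. Now equip $[d]$ with the structure isomorphism $\iota^{(d)}$ of $([1],-\Id_{[2]})^d$; using the sign count of the previous paragraph, run in reverse, one checks that the compatibility square of the definition holds for $([d],\iota^{(d)})$ together with the trace maps $\tilde t_E$ (this uses the $p=1$ case of the sign relation, the general case following by the same induction on $p$). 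Hence $([1],-\Id_{[2]})^d=([d],\iota^{(d)})$ is a right Serre functor, and since $[d]$ is an equivalence it is a Serre functor; by the uniqueness of Serre functors up to canonical isomorphism of triangle functors it is isomorphic to $(S,\iota)$, so $\mathcal{T}$ is $d$-Calabi--Yau.

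The conceptual content here is light; the genuine work, and the only place where the argument can go wrong, is the sign accounting that appears in both directions — correctly combining the Koszul sign from exchanging $[p]$ and $[q]$ with the sign hidden in the iterated structure map of $([1],-\Id_{[2]})^d$ and checking that the total is $(-1)^{pq}$. I would keep this under control by reducing to the case $p=1$ (a single explicit computation) and then inducting on $p$ using naturality of the trace maps, so that the coherence argument is invoked only in its simplest form.
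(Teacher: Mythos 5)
The paper states this proposition without proof (it is imported from Keller's survey, the paper's reference \cite{Ke}), so there is no in-paper argument to compare yours against; I can only judge your proposal on its own terms. Its skeleton is the standard one, and the checkable ingredients are right: the Yoneda dictionary $s_E(g)(f)=\sigma_E(g\circ f)$ between trace maps and the natural isomorphisms $\mathcal{T}(\cdot,SE)\to\mathcal{T}(E,\cdot)^*$, the identification of naturality in $E$ with the extreme cases $p\in\{0,d\}$ of the symmetry relation, the computation $([1],-\Id_{[2]})^d=([d],(-1)^d\,\Id_{[d+1]})$, and, for the converse, exhibiting $([d],(-1)^d\,\Id_{[d+1]})$ together with the maps $\tilde t_E$ as a right Serre functor and then appealing to uniqueness. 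Be aware, though, that the uniqueness you invoke --- that a Serre functor admits a unique triangulated structure compatible with the traces, so that any two Serre functors are isomorphic as triangle functors --- is a genuine external input (Bondal--Kapranov, Van den Bergh) and should be stated and cited, not waved at.

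The real issue is that the one step carrying all the content of the proposition is asserted rather than proved. Everything in the statement except the sign is formal bookkeeping; the proposition \emph{is} the claim that the $\iota$-signs and the shift gymnastics combine to exactly $(-1)^{pq}$. You write that ``one finds'' this in the forward direction and that ``one checks'' the compatibility square in the converse, and the reduction to $p=1$ plus induction on $p$ is offered only as a plan; as written, nothing in your argument would detect an error such as a total sign of $(-1)^{p}$ or $(-1)^{pq+d}$. To close the gap, write out the compatibility square for a single shift in the explicit form $\sigma_E\big([-1](\iota_E\circ h)\circ f\big)=-\,\sigma_{[1]E}\big(h\circ[1]f\big)$, transport it through the Calabi--Yau isomorphism $\psi\colon S\to[d]$ using $[1]\psi_E\circ\iota_E=(-1)^d\,\psi_{[1]E}$, and combine with the rotation identity coming from naturality in $E$ to obtain the case $p=1$, $q=d-1$ with its sign $(-1)^{d-1}$; then do the induction on $p$ explicitly, keeping in mind that $q=d-p$ changes along with $p$, so the inductive step must be shown to contribute the correct factor at each stage. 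The same explicit single-shift identity, read in reverse, is exactly what is needed to verify the compatibility square for $([d],(-1)^d\,\Id_{[d+1]})$ in the converse direction, so writing it down once does double duty.
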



For any $E\in \mathcal{T}$, recall the graded algebra
\[
A_E:=\Ext^*(E,E)=\bigoplus_{p\in\mathbf{Z}}\mathcal{T}(E,[p]E).
\]

If $f$ and $g$ are homogeneous elements, and $g$ is of degree $p$, the multiplication $f\cdot g$ is defined to be $([p]f)\circ g$. Suppose $\mathcal{T}$ is $d$-Calabi--Yau, then we can define the linear form
\[
t: A_E\to k
\]

which is 
\[
t_E: \Ext^d(E,E)\to k,
\]

on $A_E^d$ and zero on any other degree. The proposition implies that the pairing
\[
(a,b)=t(a\cdot b)
\]

is non-degenerate and supersymmetric. \\

After Kontsevich and Soibelman, people should consider $A_\infty$ categories. In that case, this non-degenerate pairing $(\cdot,\cdot)$ is what characterizes the Calabi--Yau property.\\

Let $\mathcal{A}$ be a minimal $A_\infty$-category ($m_1=0$) over $k$, whose morphism spaces are of finite total dimension, and $d$ be a positive integer. 
\begin{definition}
A cyclic structure of degree $d$ on $\mathcal{A}$ is the datum of bilinear forms
\[
(\cdot,\cdot): \mathcal{A}(E,F)\times \mathcal{A}(F,E)\to k
\]

of degree $-d$ such that
\begin{enumerate}
\item $(\cdot,\cdot)$ is non-degenerate for all $E,F\in \mathcal{A}$.
\item For any $n>0$ and all $E_0,E_1\ldots, E_n$, the map
\[
w_{n+1}: \mathcal{A}(E_{n-1},E_n)\otimes\mathcal{A}(E_{n-2}, E_{n-1})\otimes\ldots\otimes\mathcal{A}(E_0,E_1)\otimes \mathcal{A}(E_n,E_0)\to k
\]

defined by 
\[
(a_{n_1}, a_{n-2}, \ldots, a_0, a_n)\mapsto (m_n(a_{n_1}, a_{n-2}, \ldots, a_0), a_n)
\] 

is cyclically invariant, i.e. we have 
\[
w_{n+1}(a_{n-1},a_{n-2},\ldots,a_0,a_n)=\pm w_{n+1}(a_n, a_{n-1},a_{n-2},\ldots,a_0).
\]

Here the sign $\pm$ depends on $n$ and the parities of the homogeneous elements $a_i$. 
\end{enumerate}
\end{definition}

For any $A_\infty$ category $\mathcal{A}$, one can define the perfect derived category $\perf(\mathcal{A})$ as the thick triangulated subcategory of the derived category $\mathcal{D}(\mathcal{A})$ generated by the representable $A_\infty$ modules $\mathcal{A}(\cdot, X)$ for $X\in \mathcal{A}$. One can show that $\perf(\mathcal{A})$ is Hom--finite. 

\begin{proposition}
If $\mathcal{A}$ has a cyclic structure of degree $d$, then $\perf(\mathcal{A})$ is a $d$-Calabi--Yau category in the sense of the definition given earlier. 
\end{proposition}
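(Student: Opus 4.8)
The plan is to verify the criterion of the Proposition above for the triangulated category $\perf(\mathcal{A})$: I will exhibit a Serre functor on $\perf(\mathcal{A})$ together with a family of trace maps $t_E\colon\perf(\mathcal{A})(E,[d]E)\to k$ whose induced pairings are non-degenerate and satisfy the cyclic sign rule. For this I would work with the standard $A_\infty$-enhancement of $\perf(\mathcal{A})$: as a triangulated category it is $H^0(\mathcal{P})$, where $\mathcal{P}=\mathrm{Tw}(\mathcal{A})^{\pi}$ is the idempotent completion of the $A_\infty$-category of one-sided twisted complexes over $\mathcal{A}$ — an object of $\mathrm{Tw}(\mathcal{A})$ being a pair $\bigl(\bigoplus_a X_a[n_a],\delta\bigr)$ with $X_a\in\mathcal{A}$ and $\delta=(\delta_{ba})$ strictly triangular and satisfying the Maurer--Cartan equation of $\mathcal{A}$, the morphism complexes being the matrix complexes with the $\delta$-twisted differential $m_1^{\mathcal{P}}$. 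One may take $\mathcal{P}$ to be strictly unital, and all its morphism complexes have finite total dimension since $\mathcal{A}$ does and twisted complexes are finite direct sums; in particular $\perf(\mathcal{A})$ is Hom-finite.

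The heart of the matter is an extension statement that I would establish first: \emph{the cyclic structure of degree $d$ on $\mathcal{A}$ extends canonically to a cyclic structure of degree $d$ on $\mathcal{P}$.} For twisted complexes $X=(\bigoplus_a X_a[n_a],\delta_X)$, $Y=(\bigoplus_b Y_b[m_b],\delta_Y)$ and matrices $\phi=(\phi_{ba})\in\mathcal{P}(X,Y)$, $\psi=(\psi_{ab})\in\mathcal{P}(Y,X)$ one sets $\langle\phi,\psi\rangle:=\sum_{a,b}\varepsilon(a,b)\,(\phi_{ba},\psi_{ab})$ for suitable Koszul signs $\varepsilon(a,b)$ absorbing the shifts, extended to direct summands by restriction along a splitting. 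Non-degeneracy of $\langle\cdot,\cdot\rangle$ on each $\mathcal{P}(X,Y)$ is immediate from non-degeneracy of the block pairings. The substantive point is that the forms $w^{\mathcal{P}}_{n+1}$ built from $\langle\cdot,\cdot\rangle$ and the operations $m^{\mathcal{P}}_n$ of $\mathcal{P}$ remain cyclically invariant: expanding $m^{\mathcal{P}}_n$ as a sum of operations $m_k$ of $\mathcal{A}$ applied to strings that interleave the entries $\phi_{ba}$ with copies of the Maurer--Cartan elements $\delta_X,\delta_Y$, one regroups the result cyclically and sees the cyclic invariance of the $w_{k+1}$ of $\mathcal{A}$, together with the Maurer--Cartan equations for $\delta_X$ and $\delta_Y$, conspire to give the required identities with all signs matching. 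The case $n=1$ — cyclic invariance of $w^{\mathcal{P}}_2(a,b)=(m_1^{\mathcal{P}}a,b)$ — says exactly that $\langle\cdot,\cdot\rangle$ is a map of complexes for $m_1^{\mathcal{P}}$, which is a genuine condition since $m_1^{\mathcal{P}}\neq 0$.

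Granting this, I would finish as follows. Being a non-degenerate map of complexes, $\langle\cdot,\cdot\rangle$ gives an isomorphism $\mathcal{P}(X,Y)\xrightarrow{\sim}\mathcal{P}(Y,X)^{*}[-d]$, hence — $k$ being a field — isomorphisms on cohomology $\perf(\mathcal{A})(X,[n]Y)\cong\perf(\mathcal{A})(Y,[d-n]X)^{*}$ natural in both arguments; for $n=0$ this reads $\perf(\mathcal{A})(X,Y)^{*}\cong\perf(\mathcal{A})(Y,[d]X)$ naturally in $Y$, so $Y\mapsto\perf(\mathcal{A})(X,Y)^{*}$ is represented by $[d]X$. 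Thus $[d]$ is a right Serre functor, and being an autoequivalence it is a Serre functor, with $\iota$ the canonical transformation on $([1],-\Id_{[2]})^d$, so the standing hypothesis of the Proposition above is met. I would then set $t_E([a]):=\langle a,\mathrm{id}_E\rangle$ for $[a]\in H^d(\mathcal{P}(E,E))=\perf(\mathcal{A})(E,[d]E)$; this is well defined because $\mathrm{id}_E$ is a cocycle and $\langle\cdot,\cdot\rangle$ is a chain map, and functorial in $E$. Using the cyclic invariance of $w^{\mathcal{P}}_3$ and the strict unit, one computes on cohomology classes $t_E(([p]f)\circ g)=\pm\,w^{\mathcal{P}}_3(f,g,\mathrm{id}_E)=\pm\,\langle f,g\rangle$, so the pairing induced by the $t_E$ agrees up to sign with $\langle\cdot,\cdot\rangle$ on $\Ext$-groups and is therefore non-degenerate, while cyclically permuting $f$, $g$ and $\mathrm{id}_E$ in $w^{\mathcal{P}}_3$ yields the sign rule $t_E(([p]f)\circ g)=(-1)^{pq}t_F(([q]g)\circ f)$ for $p+q=d$. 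By the Proposition above, $\perf(\mathcal{A})$ is then $d$-Calabi--Yau.

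The hard part will be the extension statement of the second paragraph — concretely, checking that the $w^{\mathcal{P}}_{n+1}$ stay cyclically invariant. This is the sign-and-combinatorics core: one must pin down the Koszul-sign convention in $\langle\cdot,\cdot\rangle$ (so that, in particular, $\mathrm{id}_E$ pairs correctly), expand the twisted operations $m^{\mathcal{P}}_n$, and match the cyclic regroupings term by term against the Maurer--Cartan equation and the cyclic invariance of the $w_{k+1}$ of $\mathcal{A}$. A secondary nuisance is the compatibility of the construction with the idempotent completion — that $\langle\cdot,\cdot\rangle$ restricts along a splitting $E''\cong E\oplus E'$ to the cyclic pairing on the summand — which is routine once the twisted-complex case is in hand. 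One could instead route the argument through a minimal model of $\mathcal{P}$, invoking that the minimal model of a cyclic $A_\infty$-category is again cyclic (Kontsevich; Kajiura), but the essential content is the same.
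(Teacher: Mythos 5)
The paper itself offers no proof of this proposition: it is quoted (with the surrounding definitions) from Keller's notes \cite{Ke}, so there is nothing internal to compare you against line by line. Measured against the standard argument that the cited sources have in mind, your route is the right one and essentially the same: realize $\perf(\mathcal{A})$ as $H^0$ of (the idempotent completion of) twisted complexes over $\mathcal{A}$, extend the cyclic pairing blockwise with Koszul signs, observe that non-degeneracy plus compatibility with $m_1^{\mathcal{P}}$ gives quasi-isomorphisms $\mathcal{P}(X,Y)\simeq\mathcal{P}(Y,X)^*[-d]$, deduce that $[d]$ represents $\perf(\mathcal{A})(X,\cdot)^*$, and then verify the trace-map criterion of the preceding proposition using cyclic invariance of $w_3^{\mathcal{P}}$ and the strict unit (your two rotations of $w_3(f,g,\mathrm{id})$ correctly recover both the identification with $\langle f,g\rangle$ and the $(-1)^{pq}$-symmetry, so you do not need to assume graded symmetry of the pairing separately). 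Two caveats. First, as you say yourself, the entire mathematical content sits in the deferred extension lemma — that the forms $w^{\mathcal{P}}_{n+1}$ on twisted complexes remain cyclically invariant with the right signs — and as written this is announced, not proved; until that sign bookkeeping (or the alternative minimal-model route via Kajiura/Kontsevich--Soibelman, which you mention) is carried out, the argument is a correct plan rather than a complete proof. Second, note that the paper's definition of a cyclic structure is stated only for \emph{minimal} $\mathcal{A}$ ($m_1=0$), so on the non-minimal category $\mathcal{P}$ you are implicitly using the chain-level notion in which the $n=1$ case (the pairing being a map of complexes) is an extra condition; you handle this correctly, but it is worth flagging explicitly, and it also shows why passing to a minimal model of $\mathcal{P}$ is the cleanest way to land back exactly in the paper's definitions.
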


Therefore, from now on, by a $3$-Calabi--Yau category $\mathcal{C}$, we mean a triangulated $A_\infty$ category $\mathcal{C}$ with a cyclic structure of degree $3$. This is also called a non-commutative Calabi--Yau variety of dimension $3$ by Kontsevich and Soibelman, and is the natural setting for DT invariants, if you want to consider all the interesting examples. \\

For $\mathcal{C}$ a $3$-Calabi--Yau category, and $E$ an object in $\mathcal{C}$. Define the potential $W_E$ as the formal power series  
\[
W_E(a)=\sum_{n\geqslant 1}\frac{w_{n+1}(a,\dots, a)}{n+1}
\]

for $a\in \Ext^1(E,E)$. Here we have used the assumption that $\mathcal{C}$ is minimal. In general, $W_E$ induces a formal function $W_E^{\text{min}}$ over $\Ext^1(E,E)$. 

\subsection{Bridgeland's Stability conditions and DT invariants}
The natural triangulated category $\mathcal{T}$ itself is usually too big. We need to use some stability conditions to chop the category down to manageable size. In history, various notions of stability have been studied for the category of sheaves on a variety. The following stability condition is introduced by Bridgeland in \cite{Br07} for a general triangulated category. 

\begin{definition}
A stability condition $\sigma=(Z,\mathcal{P})$ on a triangulated category $\mathcal{T}$ consists of a group homomorphism $Z: K(\mathcal{T})\to \mathbf{C}$ called the central charge, and a collection of full additive subcategories $\{\mathcal{P}(\phi)\}$ for each $\phi\in \mathbf{R}$, satisfying the following axioms:
\begin{enumerate}
\item if $E\in \mathcal{P}(\phi)$, then $Z(E)=m(E)\exp(i\pi \phi)$ for some $m(E)\in \mathbf{R}_{>0}$,
\item  for all $\phi\in\mathbf{R}$, $\mathcal{P}(\phi+1)=\mathcal{P}(\phi)[1]$,
\item if $\phi_1>\phi_2$, and $E_i\in \mathcal{P}(\phi_i)$, then $\mathcal{T}(E_1,E_2)=0$,
\item for each nonzero object $E\in \mathcal{T}$, there are a finite sequence of real numbers
\[
\phi_1>\phi_2>\ldots>\phi_n,
\]

and a collection of exact triangles
\[
\begin{diagram}
0=&E_0 &\rTo &E_1&\rTo&E_2&\rTo& \ldots&\rTo &E_{n-1}&\rTo &E_n&=E,\\
&&\luDashto(1,2)\ldTo(1,2)&&\luDashto(1,2)\ldTo(1,2)&&&&&&\luDashto(1,2)\ldTo(1,2)\\
&&A_1&&A_2&&&&&&A_n
\end{diagram}
\]

with $A_i\in\mathcal{P}(\phi_i)$ for all $i$. 
\end{enumerate}
\end{definition}

Let $X$ be a smooth projective curve over $k$, and $\mathcal{T}=\mathcal{D}^b(X)$. Choose the heart to be the full sub category of coherent sheaves over $X$. For any coherent sheaf $\mathscr{F}$, define the central charge to be the slope
\[
\mu(\mathscr{F})=\frac{\deg \mathscr{F}}{\text{rank}\,{\mathscr{F}}}. 
\]

The heart, with the central charge on the heart, induce a unique stability condition on $\mathcal{D}^b(X)$. This is called Mumford's stability condition, and is one of the most famous stability conditions.\\

The set of all stability conditions is denoted by 
\[
\Stab(\mathcal{T}):=\{\sigma=(Z,\mathcal{P})\}. 
\]

It is an important theorem of Bridgeland that this space $\Stab(\mathcal{T})$ can be endowed with a natural topology such that locally, the map $\Stab(\mathcal{T})\to \Hom(K(\mathcal{T}),\mathbf{C})$,
\[
\sigma=(Z,\mathcal{P})\mapsto Z,
\]

is a homeomorphism onto the image.

\begin{remark}
To define the stability for sheaves on a higher-dimensional variety $X$, a polarization by an ample line bundle is needed. The following definition of stability is given by Simpson. Fix an ample line bundle $\mathscr{H}$, define the normalized Hilbert polynomial for every coherent sheaf $\mathscr{E}$,
\[
P_{\mathscr{H},\mathscr{E}}(n):=\frac{1}{\text{rank}\,\mathscr{E}}\chi(\mathscr{E}\otimes\mathscr{H}^n).
\]

Then $\mathscr{E}$ is called Gieseker stable (resp., semi-stable), if for all coherent subsheaves $\mathscr{F}\subset\mathscr{E}$ with $0<\text{rank}\,\mathscr{F}<\text{rank}\,\mathscr{E}$, we have $P_{\mathscr{H},\mathscr{F}}(n)<P_{\mathscr{H},\mathscr{E}}(n)$ (resp., $\leqslant$) for $n\gg 0$.  However, Gieseker's stability is not an example of Bridgeland's stability condition, but a limit of the space $\Stab(\mathcal{D}^b(X))$\footnote{A limit at the infinity.}. Note that we need the topology on $\Stab(\mathcal{D}^b(X))$ to talk about the limit. 
\end{remark}

Since the Grothendieck group $K(\mathcal{T})$ is usually not finitely generated, in order to get a finite-dimensional complex manifold, people usually assume the central charge $Z:K(\mathcal{T})\to \mathbf{C}$ factors through a finitely generated free abelian group $\Gamma\to \mathbf{C}$. For example, if $\mathcal{T}=\mathcal{D}^b(X)$ for a complex projective variety $X$, $\Gamma$ can be the numerical Chow group (use numerical equivalence) or the Betti cohomology. In general we can require the following condition. Suppose $\mathcal{T}$ is of finite type, that is for every pair of objects $E,F$ of $\mathcal{T}$, the $k$-vector space $\bigoplus_i \Ext^i(E,F)$ is finite dimensional. Then one defines the Euler form
\[
\chi(E,F):=\sum_i (-1)^i \dim \Ext^i(E,F).
\]

The free abelian group $N(\mathcal{T}):=K(\mathcal{T})/K(\mathcal{T})^\perp$, where $K(\mathcal{T})^\perp$ means the orthogonal complement with respect to the Euler form, is called the numerical Grothendieck group of $\mathcal{T}$. If this group $N(\mathcal{T})$ is of finite rank, the category $\mathcal{T}$ is said to be numerically finite. Assume $\mathcal{T}$ is of finite type, and numerically finite, define $\Stab_N(\mathcal{T})$ to be the subspace of $\Stab(\mathcal{T})$ consisting of stability conditions, for which the central charge $Z$ factors through $K(\mathcal{T})\to N(\mathcal{T})$. Restricting the natural topology of $\Stab(\mathcal{T})$ to $\Stab_N(\mathcal{T})$, $\Stab_N(\mathcal{T})$ is a finite--dimensional complex manifold (not necessarily connected). \\

From now on, we always assume that a central charge will factor through some finitely generated free abelian group $\Gamma$. We still use the notation $\Stab(\mathcal{T})$ for simplicity. Following the example of $\mathcal{D}^b(X)$, we call an element $\gamma\in \Gamma$ a Chern character. We focus on the case when the triangulated category $\mathcal{C}$ is a $3$-Calabi--Yau category. \\

Fix a stability condition $\sigma=(Z,\mathcal{P})$ and a Chern character $\gamma\in \Gamma$, construct the moduli space $\mathcal{M}^{\text{\ss}}_\sigma(\gamma)$\footnote{Since ''\ss'' is "ss" in German, we use it to stand for semi-stable.} of semi-stable objects with Chern character $\gamma$. It contains the stable objects as a dense open subspace. Our goal is to "count" the objects in the space $\mathcal{M}^{\text{\ss}}_\sigma(\gamma)$.\\

Let's first consider an ideal situation. Assume that the coarse moduli space exists, and is a projective variety. We want to define some counting invariants. One might first attempt to associate to $\mathcal{M}_\sigma^{\text{\ss}}(\gamma)$ its Euler characteristic $\chi(\mathcal{M}^{\text{\ss}}_\sigma(\gamma))$. But this naive approach doesn't usually work. A sensible, deformation invariant counting in geometry requires a virtual fundamental class.  The existence of the virtual fundamental class depends on the obstruction theory, not just on the moduli space as a scheme. There is a type of obstruction theory called the perfect obstruction theory that produces a virtual fundamental class. A perfect obstruction theory is called symmetric, if the deformation space and the obstruction space are dual to each other. In this case, the virtual fundamental class is of degree $0$. Suppose there is a symmetric obstruction theory for $\mathcal{M}^{\text{\ss}}_\sigma(\gamma)$. The integral of $1$ against the virtual fundamental class can be regarded as the virtual counting of points in $\mathcal{M}^{\text{\ss}}_\sigma(\gamma)$. This is the DT invariant for the Chern character $\gamma$, and is denoted by $\Omega_\sigma(\gamma)$. The moduli space doesn't have to be from a Bridgeland's stability. In \cite{Br}, any virtual count of a proper scheme with a symmetric obstruction theory is called a Donaldson--Thomas type invariant. We simply call them classic DT invariants to distinguish them from general ones.  \\

Suppose $X$ is a Calabi--Yau $3$-fold, choose $\gamma=(1,0,-\beta,-n)\in H^0\oplus H^2\oplus H^4\oplus H^6$. Denote the space of the moduli space of Gieseker stable sheaves with trivial determinant and Chern character $\gamma$ by $I_n(X,\beta)$. This is a fine moduli space (Hilbert scheme). The deformation/obstruction complex has amplitude in degree $1$ and $2$. The Serre duality for $X$ implies that $I_n(X,\beta)$ admits a symmetric obstruction theory.  Moreover, $I_n(X,\beta)$ is equal to the space of semi-stable objects. Define
\[
I_{n,\beta}:=\int_{[I_n(X,\beta)]^{\text{vir}}}1.
\]

$I_{n,\beta}$ is always an integer. This is the original Donaldson--Thomas invariant studied in \cite{DT98} and \cite{T00}. The existence of a symmetric obstruction theory is the reason why people are most interested in counting curves on a Calabi--Yau $3$-fold, as opposed to a general dimension. If a Bridgeland's stability condition $\sigma$ is very close to the Gieseker stability, then we have the classic DT invariants $\Omega_\sigma(\gamma)$ constructed from symmetric obstruction theories, and $\Omega_\sigma(\gamma)=I_{n,\beta}$. That is how the invariants $\Omega_\sigma(\gamma)$ got the names. \\

In general, however, the moduli space $\mathcal{M}_\sigma^{\text{\ss}}(\gamma)$ is an Artin stack, and we don't have a symmetric obstruction theory. A different approach is needed to define the general DT invariants. In \cite{Br}, Behrend discovered an equivalent definition of the classic DT invariants. Note that if the moduli space $\mathcal{M}$ is smooth, and the obstruction bundle is $\Omega_\mathcal{M}$, the dual of the tangent bundle, then the virtual counting is $(-1)^{\dim \mathcal{M}}\chi(\mathcal{M})$. In general, as proved in loc. cit., the virtual counting from the symmetric obstruction theory is equal to a weighted Euler characteristics $\chi(\mathcal{M},\nu_\mathcal{M})$, for a weight function $\nu_\mathcal{M}$
\[
\chi(\mathcal{M},\nu_\mathcal{M})=\sum_{n\in\mathbf{Z}}n\chi(\{\nu_\mathcal{M}=n\}),
\]

where $\chi$ on the right hand side is the Euler characteristic of the discrete set. The weight function $\nu_\mathcal{M}$ can be  heuristically interpreted as follows. First when $\mathcal{M}$ is a critical locus of a regular function $f$ over a smooth ambient space $U$, we have
\[
\nu_\mathcal{M}(p)=(-1)^{\dim U}(1-\chi(MF_p)),
\] 

where $MF_p$ is the Milnor fibre at the point $p\in \mathcal{M}\subset U$. If a space $\mathcal{M}$ admits a symmetric obstruction theory, then it should be viewed, at least locally, as the critical locus of a regular functional $f$ over some smooth ambient space $U$\footnote{For example, in the original DT invariants defined by Thomas, the moduli space is the set of critical points of the holomorphic Chern-Simons functional.}. Therefore, heuristically, the DT invariants are defined in terms of Milnor fibers. Moreover, the use of the Euler characteristic suggests that they are from some motives.\\

This approach to classic DT invariants can be generalized. Morally, we should view a $3$-Calabi--Yau category $\mathcal{C}$ as follows. The objects $\ob(\mathcal{C})$ form a set. The morphisms form a bundles over the sets $\ob(\mathcal{C})\times\ldots\times\ob(\mathcal{C})$, and higher composition maps are morphisms of tensor products of such bundles. Then we have the bundle over $\ob(\mathcal{C})$ defined by the space $\Ext^1(E,E)$, and the formal function $W_E^{\min}$ over the bundle near the zero section. Consider the Milnor fiber of the potential $W_E^{\min}$, and define the weights $\nu$ by taking the Euler characteristics. Then the integral of the weight function $\nu$ against certain measure on $\mathcal{M}^{\text{\ss}}_\sigma(\gamma)\subset \ob(\mathcal{C})$ should be the DT invariants.\\

Of course, this is just a very rough idea. The general theory in \cite{KS08} is quite involved, and we are unable to get into the details. First, in order to make sense of the integral, you need some control of the category. The $3$-Calabi--Yau category $\mathcal{C}$ is assumed to be ind-constructible, so that the set $\ob(\mathcal{C})$ is an ind-constructible set, and the bundles are constructible bundles. The theory of Bridgeland's stability condition is modified correspondingly. Secondly, the whole theory is motivic. In loc. cit., the theory of motivic stack functions is developed, and for each ind-constructible category $\mathcal{C}'$, the motivic Hall algebra $H(\mathcal{C}')$ is defined. For any stability condition $\sigma$ and any strict sector\footnote{This is used to talk about formal functions, and we will use it again in the next section.} $V\subset \mathbf{R}^2$, pick a branch of the log function, we can define a category $\mathcal{C}_V$. The Milnor fiber of $W_E^{\min}$ is replaced by the motivic Milnor fiber. The motivic weight is defined by the motivic Milnor fiber with some additional data called the orientation data. The measure is defined as an invertible element in the motivic Hall algebra $\widehat{H}(\mathcal{C}_V)$. For each $V$, the integral takes the value $A_V^{\text{mot}}$ in a unital associative algebra $\mathcal{R}_V$ called the motivic quantum torus. The motivic DT invariant is thus defined as a collection $\{A_V^{\text{mot}}\}$ (one for each $V$) for an ind-constructible $3$-Calabi--Yau category $\mathcal{C}$ with a stability condition $\sigma$, under the assumption of a conjectural integral identity and the assumption of the existence of orientation data. The integral identity conjecture (loc. cit. Sect. 4.4 Conjecture 4) is proved for the $l$-adic realization of the motive (loc. cit. Sect. 4.4 Proposition 9). For the construction of the numerical DT invariants, it suffices.\\

The way to get numerical DT invariants from motivic ones is by taking the quasi-classic limit. First, by a twisted Serre polynomial, we have a realization from the motivic quantum tori to quantum tori. The motivic DT invariants are then described in terms of automorphisms of the quantum tori. The quasi-classic limit of the integer quantum torus is a Poisson torus which we will see later. Assume the absence of poles conjecture, the quasi-classic limit of the automorphisms exist, and we get numbers which are defined to be the numerical DT invariants. It is not obvious that these numbers are integers. It is a conjecture that they should be integers. Moreover, in certain cases, it's been proved that the numerical DT invariants thus defined agree with the classic DT invariants. Therefore, the motivic DT invariants should be regarded as the quantization of the classic DT invariants, with the quantization parameter being the motive of an affine line.\\


From now on, let's assume the numerical DT invariants $\Omega_\sigma(\gamma)$ are defined for generic stability conditions.\\

For any $\mathbf{Q}$-linearly independent collection of vectors $\{\gamma_1,\ldots,\gamma_k\in \Gamma\}$ with $k\geqslant 2$, and for a general $\sigma=(Z,\mathcal{P})\in \Stab(\mathcal{C})$, the homomorphism $Z$ restricted to the $\mathbf{R}$-linear span of $\{\gamma_1,\ldots,\gamma_k\}$ is surjective onto $\mathbf{C}$. In other words, the $k$ vectors $\gamma_1,\ldots,\gamma_k$ are not mapped to a straight line in $\mathbf{C}$ by $Z$. \\

\begin{definition}
The subset
\[
\Big\{\sigma=(Z,\mathcal{P})\in \Stab(\mathcal{C}): \exists \gamma_1,\gamma_2\in \Gamma, \mathbf{Q}-\text{linealy independent, with } \Arg(Z(\gamma_1))=\Arg(Z(\gamma_2))\Big\}
\]

is called a wall. 
\end{definition}

The wall is a countable union of real codimension $1$ strata. The DT invariant $\Omega_\sigma(\gamma)$ is locally constant, if $\sigma$ stays away from the wall. However, If we go along a path of stability conditions $\sigma_t=(Z_t,\mathscr{P}_t)$ that crosses the wall, $\Omega_{\sigma_t}(\gamma)$ would jump. This phenomenon is called the wall--crossing phenomenon. It is very important to find out the transformation rule for $\{\Omega_\sigma(\gamma)\}$ when the stability conditions cross the wall. For example, if the limit stability conditions are included, then the conjectured PT/DT correspondence can be understood as wall crossings. See \cite{Ba09} and \cite{To09}. It turns out when $\mathcal{C}$ is $3$-Calabi--Yau, there is an additional structure on the generating functions $\sum_{\gamma\in \Gamma} \Omega_\sigma(\gamma)e_\gamma$ that express the transformation rule nicely. This is the wall--crossing formula (WCF) we are going to turn to. 

\section{Wall--crossing formulas}\label{2}
We need to define the stability data for a graded Lie algebra, which is analogous to the stability condition for a triangulated category. \\

Fix $k$ and a free abelian group $\Gamma$ of finite rank. Let $\mathfrak{g}$ be a Lie algebra over $k$ graded by $\Gamma$.
\[
\mathfrak{g}=\bigoplus_{\gamma\in \Gamma}\mathfrak{g}_\gamma.
\]

\begin{definition}
A stability data on $\mathfrak{g}$ is a pair $\sigma=(Z,a)$ such that
\begin{enumerate}
\item $Z: \Gamma\to \mathbf{C}$ is a group homomorphism,
\item $a=\{a(\gamma)\}_{\gamma\in \Gamma}$ is a collection of elements $a(\gamma)\in \mathfrak{g}_\gamma$, satisfying the following property called the support property: 
There exists a non-degenerate quadratic form $Q$ on $\Gamma_\mathbf{R}$ such that
\begin{itemize}
\item  $Q\vert_{\ker Z}<0$, where we use the same notation $Z$ for the natural extension of $Z$ to $\Gamma_\mathbf{R}$,
\item  $\Supp a:=\{\gamma\in \Gamma: a(\gamma)\neq 0\}\subset \{\gamma\in \Gamma\backslash \{0\}: Q(\gamma)>0\}$.
\end{itemize}
\end{enumerate}
\end{definition}

\begin{remark}
The support property needs explanation. It is equivalent to the following property. There exists a norm $\Vert \cdot\Vert$ on $\Gamma_\mathbf{R}$ and a constant $C>0$ such that for any $\gamma\in \Supp a$, one has 
\[
\Vert\gamma\Vert\leqslant C \vert Z(\gamma)\vert. 
\]

The equivalence follows from the following relation between the quadratic form $Q$ and the norm $\Vert\cdot\Vert$,
\[
Q(\gamma)=-\Vert\gamma\Vert^2+C'\vert Z(\gamma)\vert^2. 
\]

We call both of them the support property. It is clear from the second formulation that the support property implies the image of $\Supp a$ under $Z$ is discrete in $\mathbf{C}$, with at most polynomially growing density at infinity. It is related to the locally finiteness of the stability conditions in the sense of Bridgeland.  
\end{remark}

The stability data $(Z,a)$ is equivalent to an equivalent class of a triple $(Z,Q,A)$. Let $\mathcal{S}$ be the set of strict cone sectors in $\mathbf{R}^2$, possibly degenerate (i.e. rays). \\

Consider the triple $(Z,Q,A)$ such that,
\begin{enumerate}
\item $Z:\Gamma\to \mathbf{R}^2$ is a group homomorphism (extended to $\Gamma_\mathbf{R}$ linearly),
\item $Q$ is a non-degenerate quadratic form on $\Gamma_\mathbf{R}$ such that $Q\vert_{\ker Z}<0$,
\item $A$ is an assignment $V\to A_V\in G_{V,Z,Q}$, where $V\in \mathcal{S}$ and $G_{V,Z,Q}$ is a pronilpotent group whose Lie algebra is
\[
\mathfrak{g}_{V,Z,Q}:=\prod_{\gamma\in C(V,Z,Q)\cap \Gamma}\mathfrak{g}_\gamma,
\]

and $C(V,Z,Q)$ is a convex cone in $\Gamma_\mathbf{R}$ generated by $Z^{-1}(V)\cap \{Q\geqslant 0\}$. The assignment $A$ is required to satisfy the factorization property: If $V=V_1\coprod V_2$ clockwise, then $A_V=A_{V_1}\cdot A_{V_2}\in G_{V,Z,Q}$. 
\end{enumerate}

There is an equivalence relation on the set of all triples $(Z,Q,A)$. We say $(Z,Q,A)$ is equivalent to $(Z', Q', A')$ if $Z=Z'$, $A_V$ and $A'_V$ can be identified as an element in some pronilpotent group $G_{V,Z,Q''}$ for every $V\in \mathcal{S}$.

\begin{theorem}
There is a bijection between the the set of equivalent classes of the triple $(Z, Q, A)$ and the set of stability data $(Z,a)$. 
\end{theorem}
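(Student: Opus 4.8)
The plan is to construct the two maps of the claimed bijection explicitly and check they are mutually inverse. Starting from a stability datum $(Z,a)$, the support property provides a non-degenerate quadratic form $Q$ with $Q|_{\ker Z}<0$ and $\Supp a\subset\{Q>0\}$; take this $Q$ as the second entry of the triple. For each $V\in\mathcal{S}$, the cone $C(V,Z,Q)$ generated by $Z^{-1}(V)\cap\{Q\geqslant 0\}$ meets $\Gamma$ in a set on which $\mathfrak{g}_{V,Z,Q}=\prod_{\gamma\in C(V,Z,Q)\cap\Gamma}\mathfrak{g}_\gamma$ is pronilpotent — here one must check that the grading is compatible with the Lie bracket in the sense that the pieces indexed by $\gamma$ with bounded $|Z(\gamma)|$ and bounded $\|\gamma\|$ generate a nilpotent quotient, which follows from the norm form of the support property. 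Then set $A_V:=\exp\!\big(\sum_{\gamma\in C(V,Z,Q)\cap\Gamma}a(\gamma)\big)\in G_{V,Z,Q}$, the product being taken in the pronilpotent group (the sum lies in $\mathfrak{g}_{V,Z,Q}$ precisely because $a(\gamma)=0$ unless $Q(\gamma)>0$). The factorization property $A_V=A_{V_1}\cdot A_{V_2}$ for $V=V_1\amalg V_2$ clockwise is the nontrivial point: it is NOT simply additivity of the exponents, because $C(V,Z,Q)$ is larger than $C(V_1,Z,Q)\cup C(V_2,Z,Q)$, so $A_{V_1}\cdot A_{V_2}$ must be re-expanded via Baker–Campbell–Hausdorff inside $G_{V,Z,Q}$; one shows the resulting element, projected to each pronilpotent quotient, is independent of the subdivision and depends only on the $a(\gamma)$ with $\gamma$ in the full cone, so it may be taken as the definition of $A_V$ directly by a limit over finer subdivisions.

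Conversely, given an equivalence class of triples $(Z,Q,A)$, I would recover $a$ as follows. For a ray $\ell\in\mathcal{S}$ and any rank-one sub-semigroup direction, the cone $C(\ell,Z,Q)$ contains only the rays $Z^{-1}(\ell)$, and one reads off $A_\ell\in G_{\ell,Z,Q}$, whose Lie algebra $\mathfrak{g}_{\ell,Z,Q}=\prod_{\gamma:\,Z(\gamma)\in\ell,\,Q(\gamma)\geqslant 0}\mathfrak{g}_\gamma$; take $\log A_\ell$ and extract its component in each $\mathfrak{g}_\gamma$, calling it $a(\gamma)$. For $\gamma$ with $Z(\gamma)=0$, the hypothesis $Q|_{\ker Z}<0$ forces $\gamma\notin C(V,Z,Q)$ for any $V$, so set $a(\gamma)=0$; this also handles the constraint that $\Supp a$ avoids $\ker Z$. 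One checks that the quadratic form occurring in the definition of a stability datum can be taken to be the given $Q$, so the support property holds by construction, and that a different representative $Q''$ of the equivalence class yields the same $a(\gamma)$ — because the component of $\log A_\ell$ in a fixed $\mathfrak{g}_\gamma$ does not see which ambient cone was used, the identification of $A_\ell$ with an element of $G_{\ell,Z,Q''}$ being the content of the equivalence relation.

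The final step is to verify the two constructions are inverse to each other. One direction, $(Z,a)\mapsto(Z,Q,A)\mapsto(Z,a')$, requires $a'(\gamma)=a(\gamma)$, which amounts to the statement that extracting the $\mathfrak{g}_\gamma$-component of $\log\exp(\sum_\beta a(\beta))$ returns $a(\gamma)$: true, since all $\beta$ contributing to a bracket landing in degree $\gamma$ satisfy $Z(\beta)$ on the same ray as $Z(\gamma)$ only in the extreme case, and otherwise the bracket lands in a different ray's cone — more carefully, since $Z(\gamma)$ is $\mathbf{R}_{>0}$-indivisible in its ray or not, one argues by induction on the length of $Z(\gamma)$ using that commutators strictly increase the number of summands. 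The other direction, $(Z,Q,A)\mapsto(Z,a)\mapsto(Z,Q,A')$, requires $A'_V$ and $A_V$ to define the same equivalence class; since both reduce, ray by ray, to the same $A_\ell$, and since the $A_V$ for two-dimensional $V$ are determined by the rays they contain via the factorization property and a BCH re-expansion (the same limit-over-subdivisions argument as above, now applied to $A$ rather than to the exponentials), they agree in every pronilpotent quotient $G_{V,Z,Q''}$, hence are equivalent. The main obstacle throughout is the bookkeeping around the factorization/BCH compatibility — showing that the group-level product over a coarse sector is computed correctly from ray data and is insensitive to the choice of quadratic form within its equivalence class — rather than anything about $Z$ or $Q$ individually; I would isolate this as a lemma on pronilpotent groups graded by $\Gamma$ with a support-type control, proved by passing to finite-rank nilpotent quotients where BCH is a finite sum.
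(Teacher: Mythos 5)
Your overall strategy (build both maps explicitly and check they are mutually inverse) is the same as the paper's, and your converse direction is fine --- extracting $a(\gamma)$ as the $\mathfrak{g}_\gamma$-component of $\log A_l$ for the ray $l$ through $Z(\gamma)$ and setting $a(\gamma)=0$ on $\ker Z$ actually treats the non-generic case more carefully than the paper's remark about generic injective $Z$. The genuine problem is in your forward direction. Defining $A_V:=\exp\big(\sum_{\gamma\in C(V,Z,Q)\cap\Gamma}a(\gamma)\big)$ for an arbitrary sector $V$ does not satisfy the factorization property, as you yourself notice, and the patch you offer is wrong as stated: the products $A_{V_1}\cdot A_{V_2}\cdots$ attached to different subdivisions of $V$ are \emph{not} independent of the subdivision --- they differ by nontrivial BCH commutator terms --- so there is nothing to ``show is independent''; they only stabilize, in each nilpotent quotient of $G_{V,Z,Q}$, once the subdivision is fine enough to separate the finitely many rays carrying nonzero $a$ in that quotient. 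The clean construction, and the one the paper uses, avoids all of this: exponentiate only over single rays, $A_l:=\exp\big(\sum_{Z(\gamma)\in l,\,Q(\gamma)\geqslant 0}a(\gamma)\big)$ --- legitimate because the classes $\gamma$ lying over a fixed ray form a sub-semigroup, so $\prod_{Z(\gamma)\in l,\,Q(\gamma)\geqslant 0}\mathfrak{g}_\gamma$ is a pronilpotent Lie subalgebra --- and then \emph{define} $A_V:=\prod^{\longrightarrow}_{l\subset V}A_l$, the clockwise-ordered product, which converges because per truncation only finitely many factors are nontrivial (support property plus discreteness of $Z(\Supp a)$). With this definition the factorization property is immediate, since the rays in $V$ are those of $V_1$ followed clockwise by those of $V_2$; no BCH bookkeeping or limit over subdivisions is needed.

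With that repair your verification that the two maps are inverse also collapses to something much simpler, and your argument for it as written is not checkable. For the composite $(Z,a)\mapsto(Z,Q,A)\mapsto(Z,a')$ you get $a'=a$ simply because $\log A_l$ is literally the sum $\sum_{Z(\gamma)\in l}a(\gamma)$ used to define $A_l$ ($\exp$ and $\log$ are mutually inverse bijections for a pronilpotent group), so the induction ``on the length of $Z(\gamma)$'' and the discussion of which brackets can contribute in degree $\gamma$ are unnecessary --- and as phrased (``commutators strictly increase the number of summands'') too vague to constitute a proof. For the other composite, $A'_V$ and $A_V$ agree because both are determined by their ray factors through the factorization property, which is exactly where the ray-product definition does the work. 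Your observation that one must check independence of the choice of quadratic form within an equivalence class is correct and is handled the way you indicate.
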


\begin{proof}
The key is the factorization property. If $(Z,a)$ is given, for every ray $l\subset \mathbf{R}^2$, define 
\[
A_l:=\exp{\Bigg(\sum_{Z(\gamma)\in l, Q(\gamma)\geqslant 0}a(\gamma)\Bigg)}. 
\]

Then, for arbitrary $V\in \mathcal{S}$, 
\[
A_V=\prod^{\longrightarrow}_{l\subset V} A_l. 
\]

We use "$\longrightarrow$" to denote the clockwise product, and "$\longleftarrow$" to denote the counterclockwise product.\\ 

On the other hand, for a generic $(Z,Q,A)$, $Z$ is injective on $\Gamma$. We can read the data $a(\gamma)$ off from each $A_l$ by taking log, since $A_l$ are in pronilpotent groups. 
\end{proof}

Therefore, we also call the triple $(Z, Q,A)$ a stability data. Define the space of stability data for $\mathfrak{g}$, 
\[
\Stab(\mathfrak{g}):=\big\{\sigma=(Z,a) \text{ is a stability data}\big\}.
\]

The magic thing about the definition is that $\Stab(\mathfrak{g})$ is also endowed with a natural topology. Let $X$ be a topological space, $x_0\in X$ a point, and $(Z_x, a_x)$ a family of stability data parametrized by $X$. 

\begin{definition}
The family $(Z_x,a_x)$ is called continuous at $x_0$ if 
\begin{enumerate}
\item the map $X\to \Hom(\Gamma,\mathbf{C})$ defined by $x\to Z_x$ is continuous at $x=x_0$,
\item assume $Q_0$ is a quadratic form compatible with the stability data $(Z_{x_0}, a_{x_0})$, then there is an open neighborhood $\mathcal{U}$ of $x_0$ such that $(Z_x,a_x)$ are all compatible with $Q_0$ for all $x\in \mathcal{U}$. 
\item for any closed strict cone sector $V\in \mathcal{S}$ such that $Z(\Supp a_{x_0})\cap \partial V=\emptyset$, the map
\[
x\to \log A_{V,Z_x,Q_x}\in \mathfrak{g}_{V,Z_x, Q_x}\subset \prod_{\gamma\in \Gamma}\mathfrak{g}_\gamma,
\]

is continuous. Here $\prod_{\gamma\in\Gamma}\mathfrak{g}_\gamma$ has the product topology of the discrete topology. \label{continuous}
\end{enumerate}
\end{definition}

It is proved as a proposition in \cite{KS08} that there is a Hausdorff topology on $\Stab(\mathfrak{g})$ such that a continuous family as above is equivalent to a continuous map from $X$ to $\Stab(\mathfrak{g})$. The proposition also implies that the map $\Stab(\mathfrak{g})\to \Hom(\Gamma, \mathbf{R}^2)$ by $\sigma\mapsto Z$ is a local homeomorphism onto the image. \\

The most important property of the definition for our purpose is (\ref{continuous}), which implies that for all $\gamma\in \Gamma\backslash\{0\}$, the $\gamma$-component of $\log A_{V,Z_x,Q_x}$ stays constant, as long as no $Z(\gamma)$ with $a(\gamma)\neq 0$ enters into $V$. Denote the ray $\mathbf{R}_{\geqslant 0}Z_x(\gamma)$ by $l_{\gamma,x}$. Fix $\gamma\in \Gamma$, and focus on $V=l_{\gamma,x}$. Recall that the image $Z(\Supp a)$ is discrete. Then we arrive at the conclusion: if $l_{\gamma',x}=l_{\gamma,x}$ implies that $\gamma'$ is a multiple of $\gamma$, then $a_x(\gamma)$ is a constant in a neighborhood of $x$. \\

Once this property is understood, the WCF is obtained almost immediately. As in $\Stab(\mathcal{C})$, define the wall $\mathcal{W}$ in $\Stab(\mathfrak{g})$ to be the subset where two $\mathbf{Q}$-linearly independent vectors $\gamma_1,\gamma_2\in\Gamma$ are mapped to the same ray by $Z$. \\

Consider a continuous path $Z_t$ crossing the wall $\mathcal{W}$ at $t_0$. Assume the generic case: at $t_0\in \mathcal{W}$, two $\mathbf{Q}$-linearly independent primitive vectors $\gamma_1$ and $\gamma_2$ are mapped to the same ray $l$, and any other vector mapped to $l$ is generated by $\gamma_1$ and $\gamma_2$. Fix the rank $2$ lattice generated by $\gamma_1,\gamma_2$

\[
\Gamma_0:=\mathbf{Z}\gamma_1\oplus\mathbf{Z}\gamma_2. 
\]

The subset of primitive vectors is denoted by $\Gamma_0^{\prim}$.\\

 By Property (2) in the definition of the topology, we can fix a constant quadratic form $Q$ in a neighborhood of $t_0$ and $Q(\gamma_1)>0, Q(\gamma_2)>0$. Now assume $V_\epsilon$ be a small strict cone sector containing $l$, and no $Z(\gamma)$ with $a(\gamma)\neq 0$ crosses the boundary of $V_\epsilon$ in a neighborhood of $t_0$. Again such $V_\epsilon$ exists by the support property. By the Property (\ref{continuous}), $A_{V_\epsilon}$ stays constant in a neighborhood of $t_0$. However, $A_{V_\epsilon}$ has two different factorizations as $t\to t_0^-$ and $t\to t_0^+$. 
 \[
 \prod^{\longrightarrow}_{l\in V_\epsilon}A^-_{V_\epsilon}=\prod^{\longrightarrow}_{l\in V_\epsilon} A^+_{V_\epsilon}. 
 \]
 
 According to the analysis of Property (\ref{continuous}), if $\gamma\notin\Gamma_0$, $a(\gamma)$ is a constant near $t_0$. Furthermore, as these $\gamma$ are not mapped to $l$, they can be cancelled  in the above equality as $\epsilon\to 0$. Therefore, taking the limit, we have an equality involving only $\gamma\in \Gamma_0$. Assume $Z(\gamma_1)\wedge Z(\gamma_2)$ gives the normal orientation of $\mathbf{R}^2$ when $t<t_0$, and the orientation is changed when $t>t_0$. We get

\begin{proposition}[the wall--crossing formula]
 \[
 \prod^{\longrightarrow}_{\gamma\in \Gamma_0^{\prim}, Q(\gamma)>0} \exp{\Bigg(\sum_{n>0}a^-(n\gamma)\Bigg)}=\prod^{\longleftarrow}_{\gamma\in\Gamma_0^{\prim}, Q(\gamma)>0}\exp{\Bigg(\sum_{n>0}a^+(n\gamma)\Bigg)}. 
 \]
 
Here $a^-(\gamma)$ (resp., $a^+(\gamma)$) means $\lim_{t\to t^-}a_t(\gamma)$ (resp., $\lim_{t\to t^+}a_t(\gamma)$). 
\end{proposition}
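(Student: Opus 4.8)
The plan is to read the identity off the equality $A_{V_\epsilon}^{-}=A_{V_\epsilon}^{+}$ established just above, by substituting the two factorizations of $A_{V_\epsilon}$ into ordered products over rays and then letting the sector $V_\epsilon$ shrink onto $l$. First I would freeze the data near $t_0$: fix, by Property~(2) in the definition of the topology, a constant non-degenerate quadratic form $Q$ compatible with $(Z_t,a_t)$ and with $Q(\gamma_1),Q(\gamma_2)>0$, and fix a strict cone sector $V_\epsilon$ containing $l$ in its interior so thin that no $Z_t(\gamma)$ with $a_t(\gamma)\neq 0$ crosses $\partial V_\epsilon$ for $t$ near $t_0$ (possible by the support property). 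Then $A_{V_\epsilon,Z_t,Q}$ is independent of $t$ on a punctured neighbourhood of $t_0$ and $\log A_{V_\epsilon,Z_t,Q}\in\mathfrak{g}_{V_\epsilon,Z_t,Q}$ varies continuously, by Property~(\ref{continuous}).

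Next I would factorize and discard everything outside $\Gamma_0$. By the bijection between triples $(Z,Q,A)$ and stability data established above — that is, the factorization property together with $A_{l'}=\exp\bigl(\sum_{Z_t(\gamma)\in l',\,Q(\gamma)\geqslant 0}a_t(\gamma)\bigr)$ for each ray $l'$ — one has, for $t\neq t_0$,
\[
A_{V_\epsilon}=\prod^{\longrightarrow}_{l'\subset V_\epsilon}A_{l'},
\]
the clockwise product over rays $l'\subset V_\epsilon$, with only countably many factors different from the identity. If $\gamma\notin\Gamma_0$ and $a_t(\gamma)\neq 0$, then $\mathbf{R}_{\geqslant 0}Z_{t_0}(\gamma)\neq l$ by the genericity hypothesis that the only vectors $Z_{t_0}$ sends to $l$ are generated by $\gamma_1,\gamma_2$; hence the ray $l_{\gamma,t}$ stays on a fixed side of $l$ uniformly near $t_0$, with $a_t(\gamma)$ constant there, so the corresponding factors occupy matching positions in the $t<t_0$ and $t>t_0$ factorizations and cancel. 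Shrinking $\epsilon$ removes these rays from $V_\epsilon$ altogether, and in the limit one is left with an identity among the factors attached to the rays $l_{\gamma,t}=\mathbf{R}_{\geqslant 0}Z_t(\gamma)$ with $\gamma\in\Gamma_0$.

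It then remains to name the surviving factors and track the order. For $t\neq t_0$ the nonzero vectors of $\Gamma_0$ with $Q\geqslant 0$ are distributed by $Z_t$ among the rays $l_{\gamma,t}$, one for each primitive $\gamma\in\Gamma_0^{\prim}$ with $Q(\gamma)>0$ whose image lies in $V_\epsilon$, and every positive multiple $n\gamma$ lands on $l_{\gamma,t}$ with $Q(n\gamma)=n^{2}Q(\gamma)>0$; so the factor at $l_{\gamma,t}$ is $\exp\bigl(\sum_{n>0}a_t(n\gamma)\bigr)$. The clockwise order of these rays in $V_\epsilon$ is governed by the sign of $Z_t(\gamma_1)\wedge Z_t(\gamma_2)$, which is one sign for $t<t_0$ and the opposite for $t>t_0$; so, read against a fixed ordering of $\Gamma_0^{\prim}$, the $t<t_0$ factorization is a clockwise product and the $t>t_0$ factorization a counterclockwise product of the very same factors. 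Finally I would let $t\to t_0^{\mp}$, set $a^{\mp}(\gamma)=\lim_{t\to t_0^{\mp}}a_t(\gamma)$, and use that the limit of the ordered product of the factors $\exp\bigl(\sum_{n>0}a_t(n\gamma)\bigr)$ equals the ordered product of their limits; this turns the two sides of the surviving identity into $\prod^{\longrightarrow}_{\gamma\in\Gamma_0^{\prim},\,Q(\gamma)>0}\exp\bigl(\sum_{n>0}a^{-}(n\gamma)\bigr)$ and $\prod^{\longleftarrow}_{\gamma\in\Gamma_0^{\prim},\,Q(\gamma)>0}\exp\bigl(\sum_{n>0}a^{+}(n\gamma)\bigr)$ respectively, as claimed.

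The hard part is the double limit — first $\epsilon\to 0$, then $t\to t_0^{\pm}$ — together with the attendant claims that passing to the limit commutes with the infinite ordered products and that the non-$\Gamma_0$ factors drop out cleanly. This is exactly where the support property is essential: it forces $Z(\Supp a)$ to be discrete with at most polynomially growing density, so that modulo each term of the lower central series of the pronilpotent Lie algebras $\mathfrak{g}_{V,Z,Q}$ only finitely many factors survive, and the ordered products and their limits become honest finite manipulations in the pronilpotent groups $G_{V,Z,Q}$, as carried out in \cite{KS08}.
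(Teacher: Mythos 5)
Your proposal is correct and follows essentially the same route as the paper's own derivation (given in the text immediately preceding the proposition): fix a constant $Q$ by Property (2), choose a thin sector $V_\epsilon$ around $l$ whose boundary no relevant $Z_t(\gamma)$ crosses, use Property (\ref{continuous}) to keep $A_{V_\epsilon}$ constant across $t_0$, compare the clockwise ray factorizations on the two sides, cancel the non-$\Gamma_0$ factors as $\epsilon\to 0$, and convert the orientation flip of $Z_t(\gamma_1)\wedge Z_t(\gamma_2)$ into the reversal of the ordered product. Your added remarks on the pronilpotent structure and the limit interchanges only make explicit what the paper leaves implicit; no gap.
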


A consequence of the WCF is that, we can lift a generic path $\{Z_t\}_{0\leqslant t \leqslant 1}$ in $\Hom(\Gamma,\mathbf{R}^2)$ to a unique continuous path $\{(Z_t,a_t)\}_{0\leqslant t\leqslant 1}$ in the space $\Stab(\mathfrak{g})$, starting at a given point $(Z_0,a_0)$. This is Theorem 3 in \cite{KS08} Sect. 2.3. \\

Here is an important example. Suppose $\Gamma$ is a free abelian group of finite rank, equipped with a skew-symmetric integer valued bilinear form $\langle\cdot,\cdot\rangle$. Define the graded $k$-vector space 
\[
\mathfrak{g}_\Gamma:=\bigoplus_{\gamma\in\Gamma} ke_\gamma, 
\]

and the bracket
\[
[e_{\gamma_1},e_{\gamma_2}]:=(-1)^{\langle \gamma_1,\gamma_2\rangle}\langle \gamma_1,\gamma_2\rangle e_{\gamma_1+\gamma_2}. 
\]

It is easy to check that $[\cdot,\cdot]$ defines a $\Gamma$-graded Lie algebra structure on $\mathfrak{g}_\Gamma$. This example is closely related to DT invariants for $3$-Calabi--Yau categories.\\
 
Suppose that $\mathcal{C}$ is a $3$-Calabi--Yau category, with a cyclic structure $(\cdot,\cdot)$. Let $\Gamma=N(\mathcal{C})$. Define $\langle \cdot, \cdot\rangle$ to be the Euler form
\[
\langle E,F\rangle:=\chi(E, F)
\]

Since $\mathcal{C}$ has the cyclic structure $(\cdot,\cdot)$ of degree $3$, $\langle\cdot,\cdot\rangle$ is skew-symmetric, and is bilinear. By the definition of $N(\mathcal{C})$, $\langle\cdot,\cdot\rangle$ is non-degenerate. Construct the $\Gamma$-graded Lie algebra $\mathfrak{g}_\Gamma$ as above. For any stability condition $\sigma=(Z,\mathcal{P})$ for $\mathcal{C}$, we want to associate a stability data for the Lie algebra $\mathfrak{g}_\Gamma$. Naturally, the central charge $Z$ should be the same. It follows that the walls for $\Stab(\mathcal{C})$ are the same as the walls for $\Stab(\mathfrak{g}_\Gamma)$. Let $\Omega(\gamma)$ be the DT invariants for $\sigma$. Define a map $f:\Stab(\mathcal{C})\to \Stab(\mathfrak{g}_\Gamma)$ by
\begin{equation}\label{f}
a(\gamma):= \sum_{n\geqslant 1, \gamma/n\in \Gamma\backslash\{0\}}-\frac{\Omega(\gamma/n)}{n^2}e_\gamma. 
\end{equation}

For the usual $3$-Calabi--Yau categories $\mathcal{C}$ and stability conditions, one is often offered a natural quadratic form $Q$ to show that $a$ has the support property. For example, if $\mathcal{C}=\mathcal{D}^b(X)$, we have the Hodge--Riemann bilinear form on the cohomology group. We assume that $a$ thus defined has the support property, and $f$ is then well defined. Now the statement that the DT invariants $\Omega(\gamma)$ satisfy WCF is equivalent to the statement that $f$ is continuous. Define the dilogarithm function
\[
\Li(t):=\sum_{m\geqslant 1}\frac{t^m}{m^2}. 
\] 

Assume $\gamma$ is primitive, and $l$ is the ray containing $Z(\gamma)$. 
\[
A_l=\exp{\Bigg(\sum_{n\geqslant 1} a(n\gamma)\Bigg)}=\exp{\Bigg(-\sum_{n\geqslant 1}\Omega(n\gamma)\sum_{m\geqslant 1} \frac{e_{mn\gamma}}{m^2}\Bigg)}=\exp{\Bigg(-\sum_{n\geqslant 1}\Omega(n\gamma)\Li(e_{n\gamma})\Bigg)}. 
\]

Therefore, if $f$ is continuous over the small interval $(t_0^-,t_0^+)$, we get the WCF for DT invariants
\[
 \prod^{\longrightarrow}_{\gamma\in \Gamma_0^{\prim}, Q(\gamma)>0} \exp{\Bigg(\sum_{n\geqslant 1}\Omega^-(n\gamma)\Li(e_{n\gamma})\Bigg)}=\prod^{\longleftarrow}_{\gamma\in\Gamma_0^{\prim}, Q(\gamma)>0}\exp{\Bigg(\sum_{n\geqslant 1}\Omega^+(n\gamma)\Li(e_{n\gamma})\Bigg)}. 
\]

We can ask the same question in another way. Consider the unique continuous lift $\{(Z_t,a_t)\}_{0\leqslant t\leqslant 1}$ in the space $\Stab(\mathfrak{g})$, starting at a given point $(Z_0,a_0)$. If we define $\Omega_\sigma(\gamma)$ by Equation~\eqref{f}, are $\Omega_\sigma(\gamma)$ DT invariants for the category $\mathcal{C}$? WCF almost forces us to define DT invariants this way. \\

The definition of motive DT invariants and the motivic WCF are jointly expressed in terms of a continuous map (local homeomorphism) from $\Stab(\mathcal{C}_V)$ to some motivic quantum tori. This is the main theorem, Theorem 7 of \cite{KS08} Sect. 6.2. The proof is highly nontrivial, and we are not able to explain the ideas of the proof. 

\section{Interpretation as identities in the automorphism group of a torus}\label{3}
The Lie algebra $\mathfrak{g}_\Gamma$ is very special. Let us introduce a commutative associative product on $\mathfrak{g}_\Gamma$ by
\[
e_{\gamma_1}\cdot e_{\gamma_2}:=(-1)^{\langle\gamma_1,\gamma_2\rangle}e_{\gamma_1+\gamma_2}. 
\]

The result $k$-algebra is a twist of the usual group algebra for an algebraic torus. Define $\mathbb{T}_\Gamma$ to be the spectrum of this commutative associative algebra. It is the quasi-classic limit of the quantum torus we mentioned in Sect.~\ref{1}. It is a torsor over the algebraic torus $\Hom(\Gamma, \mathbb{G}_m)$, and is also called a torus. $\mathbb{T}_\Gamma$ is an algebraic Poisson manifold with the Poisson bracket
\[
\{f,g\}:=[f,g]. 
\]
  
The Lie algebra $\mathfrak{g}_\Gamma$ now acts on $\mathbb{T}_\Gamma$ by Hamiltonian vector fields. Denote by $\theta_\gamma$ the formal Poisson automorphism on $\mathbb{T}_\Gamma$
\[
\theta_\gamma:=\exp{\Big(\big\{-\Li(e_\gamma),\cdot\big\}\Big)}. 
\]

Compute it on the basis
\begin{equation}\label{intrinsic formula}
\theta_\gamma(e_\mu)=(1-e_\gamma)^{\langle \gamma,\mu\rangle}e_\mu. 
\end{equation}

Consider the formal automorphisms, we can write
\[
A_l:=\prod^{\longrightarrow}_{Z(\gamma)\in l} \theta_\gamma^{\Omega(\gamma)}. 
\]

Therefore WCF are identities in the formal automorphism group of $\mathbb{T}_\Gamma$. \\

Consider the wall crossing at a generic point $t_0$ of the wall $\mathcal{W}$. We can restrict everything to the rank $2$ sublattice $\Gamma_0$. Assume $\langle \gamma_1,\gamma_2\rangle=k>0$. Since $\langle\cdot,\cdot\rangle$ on $\Gamma_0$ is non-degenerate, the torus $\mathbb{T}_{\Gamma_0}$ is a symplectic manifold with the symplectic form $-k^{-1}(xy)^{-1} \ud x\wedge\ud y$. We can write the formula \eqref{intrinsic formula} in terms of the basis $\gamma_1,\gamma_2$ and define $\theta_{(a,b)}^{(k)}$, a formal automorphism of $\mathbb{T}_{\Gamma_0}^{(k)}$. However, by a choice of a quadratic refinement, which we will explain later, we can identify the twisted torus $\mathbb{T}_{\Gamma_0}$ with an ordinary algebraic torus $\mathbb{T}^{(k)}$ with the symplectic form $-k^{-1}(xy)^{-1}\ud x\wedge\ud y$. Then the formula is defined by
\begin{equation}\label{untwist formula}
\theta_{(a,b)}^{(k)}(x):=x(1-(-1)^{kab}x^ay^b)^{-kb}, \quad \theta_{(a,b)}^{(k)}(y):=y(1-(-1)^{kab}x^ay^b)^{ka}.
\end{equation}

\begin{remark}
Notice the difference between Formula \eqref{intrinsic formula} and Formula \eqref{untwist formula} is the sign $(-1)^{kab}$. This is the cost we have to pay if we want to write WCF as identities of the automorphism group of an ordinary torus. Therefore, the use of the twist torus is really the way to make WCF simpler. However, the formula in terms of ordinary torus automorphisms is usually what people use. And it is also the form used in the tropical vertex group. 
\end{remark}

Consider the formal automorphism group generated by these elements. Each element $\theta$ in this group can be factorized either in the clockwise order or in the counter clockwise order. Clockwise means that the slope $b/a$ is decreasing
\[
\theta=\prod^{\longrightarrow}_{(a,b)\in\Gamma^{\prim}_0}\Big(\theta_{(a,b)}^{(k)}\Big)^{c^-_{a,b}}=\prod^{\longleftarrow}_{(a,b)\in\Gamma^{\prim}_0}\Big(\theta_{(a,b)}^{(k)}\Big)^{c^+_{a,b}}.
\]

These identities are also called WCF. A priori , the numbers $c_{a,b}$ are in $\mathbf{Q}$, and do not necessarily come from any stability conditions. However, if we choose $\theta$ to be some special commutator, the numbers $c_{a,b}$ are integers and are DT invariants.\\

As an example, assume $k=1$, define $S=\theta_{(1,0)}^{(1)}$ and $T=\theta_{(0,1)}^{(1)}$. Let $\theta$ to be the commutator 
\[
T^{-1}\circ S\circ T\circ S^{-1}=\prod^{\longrightarrow}_{(a,b)\in\Gamma^{\prim}_0}\Big(\theta_{(a,b)}^{(1)}\Big)^{c_{a,b}}.
\]

It is proved in \cite{Re} that $c_{a,b}$ are integers determined by the Euler characteristic of framed moduli spaces of semi-stable representations of quivers. Therefore this is a WCF for DT invariants.\\

These factorization formulas for commutators also appear in the tropical vertex group. In order to make the relation more explicitly, we do the following embedding. Recall if $(\mathfrak{g}_\Gamma, \langle\cdot,\cdot\rangle)$ is from a $3$-Calabi-Yau category $\mathcal{C}$, the skew-symmetric bilinear form $\langle\cdot,\cdot\rangle$ is non-degenerate. Therefore it induces an isomorphism from $\Gamma$ to its dual $\Gamma^\vee$. Since $\Gamma_0$ is of rank $2$, let's denote the image of $\gamma\in \Gamma_0$ by $\gamma^\perp$.
\[
\gamma^\perp:=\langle \gamma,\cdot\rangle\in \Gamma_0^\vee.
\]

Consider the lattice $\Lambda:=\Gamma_0\oplus\Gamma_0^\vee$, with a skew-symmetric bilinear pairing $\langle\cdot,\cdot\rangle$:
\[
\langle (\gamma_1,\nu_1),(\gamma_2,\nu_2)\rangle:=-\langle\gamma_1,\gamma_2\rangle +\nu_1(\gamma_2)-\nu_2(\gamma_1).
\]

Then we embed the $(\Gamma_0,\langle\cdot,\cdot\rangle)$ into $(\Lambda, \langle\cdot,\cdot\rangle)$ by 
\[
\gamma\mapsto (\gamma,\gamma^\perp). 
\]

The lattice $(\Lambda, \langle\cdot,\cdot\rangle)$ defines a symplectic manifold called the symplectic double torus. It has the Poisson structure such that the Lie algebra $\mathfrak{g}_{\Gamma_0}$ is contained as a Lie subalgebra, and the Lie subalgebra corresponding to $\Gamma_0^\vee$ is abelian. \\

Similarly, in order to construct the tropical vertex group, we begin with a lattice $M$ of rank $2$ and its dual $N$. Construct a larger Lie algebra $k[M]\otimes_\mathbf{Z} N$ from the lattice $M\oplus N$. In terms of the group ring, one writes
\[
e_{(m,n)}=z^m\partial_n. 
\]

The Lie algebra is defined by
\[
[z^{m_1}\partial_{n_1},z^{m_2}\partial_{n_2}]:=z^{m_1+m_2}\partial_{n_1(m_2)\cdot n_2-n_2(m_1)\cdot n_1}. 
\]

Instead of making a pronilpotent group for each strict cone sector, in \cite{GPS}, the Lie algebra over $k$ is tensored with an Artin local ring or a complete local ring $R$ with the maximal ideal $\mathfrak{m}_R$. Define the Lie algebra
\[
\mathfrak{g}_R:=\mathfrak{m}_R\hat{\otimes}_kk[M]\otimes_\mathbf{Z}N.
\]

Since $\mathfrak{g}_R$ is complete with respect to $\mathfrak{m}_R$-adic topology, there is a pronilpotent Lie group with the Lie algebra $\mathfrak{g}_R$. Define the Lie subalgebra $\mathfrak{h}_R\subset\mathfrak{g}_R$ to be
\[
\mathfrak{h}_R:=\bigoplus_{m\in M\backslash\{0\}}z^m(\mathfrak{m}_R\otimes m^\perp). 
\]  

The tropical vertex group $\mathbb{V}_R$ is defined to be the Lie subgroup corresponding to $\mathfrak{h}_R$. Write out the Lie bracket for $\mathfrak{h}_R$,
\begin{align*}
[z^{\gamma_1}\partial_{\gamma_1^\perp},z^{\gamma_2}\partial_{\gamma_2^\perp}]&=z^{\gamma_1+\gamma_2}\partial_{\gamma_1^\perp(\gamma_2)\cdot \gamma_2^\perp-\gamma_2^\perp(\gamma_1)\cdot \gamma_1^\perp}\\
&=z^{\gamma_1+\gamma_2}\partial_{\langle \gamma_1,\gamma_2\rangle(\gamma_2^\perp+\gamma_1^\perp)}\\
&=\langle \gamma_1,\gamma_2\rangle z^{\gamma_1+\gamma_2}\partial_{(\gamma_1+\gamma_2)^\perp}. 
\end{align*}

Here we also define $\langle\gamma_1,\gamma_2\rangle:=\gamma_1^\perp(\gamma_2)$. This is a non-degenerate skew-symmetric pairing. \\

Recall the Lie bracket for  $\mathfrak{g}_{\Gamma_0}$,
\[
[e_{(\gamma_1,\gamma_1^\perp)},e_{(\gamma_2,\gamma_2^\perp)}]=(-1)^{\langle \gamma_1,\gamma_2\rangle}\langle \gamma_1,\gamma_2\rangle e_{(\gamma_1+\gamma_2,(\gamma_1+\gamma_2)^\perp)}. 
\]

We can compare this Lie algebra $\mathfrak{h}_R$ with $\mathfrak{g}_{\Gamma_0}$ we had before. Choose $(R,\mathfrak{m}_R)$ to be the toric algebra from a strict cone sector, and identify the two skew-symmetric pairing $\langle \cdot,\cdot\rangle$, we find $\mathfrak{g}_{\Gamma_0}$ and $\mathfrak{h}_R$ are almost the same except for a sign $(-1)^{\langle\gamma_1,\gamma_2\rangle}$. Define the map $f_\sigma: \mathbb{T}_{\Gamma_0}\to \mathbb{T}$ by 
\[
z^\gamma\mapsto \sigma(\gamma)e_\gamma,
\]

where $\mathbb{T}$ is an ordinary algebraic torus defined by $k[M]$. $f_\sigma$ is a homomorphism of algebras if $\sigma: \Gamma_0\to \pm 1$ satisfies
\[
\sigma(\gamma_1)\sigma(\gamma_2)=(-1)^{\langle\gamma_1,\gamma_2\rangle}\sigma(\gamma_1+\gamma_2).
\]

The function $\sigma$ is the quadric form associated to the symmetric bimultiplical form $(-1)^{\langle\gamma_1,\gamma_2\rangle}$. Such a function $\sigma$ is determined by the values on a basis of $\Gamma_0$. Therefore, it aways exists, but not unique. A choice of such a $\sigma$ is called the quadratic refinement. It identifies the twisted torus $\mathbb{T}_{\Gamma_0}$ with an ordinary torus.\\

Extend $f_\sigma$ to 
\[
z^\gamma\partial_{\gamma^\perp}\mapsto \sigma(\gamma)e_{(\gamma,\gamma^\perp)}.
\]

The Lie algebras $\mathfrak{h}_R$ and $\mathfrak{g}_{\Gamma_0}$ are thus identified, and the identities in \cite{GPS} are the same with the WCF introduced here. \\

The invariants involved in the tropical vertex groups are relative Gromov--Witten invariants. See \cite{GPS} and the lecture notes of Sara Filippini's in this volume. The close relationship between these two sides are part of the big picture we emphasized at the beginning. Again interested readers should turn to Kontsevich and Soibelman's work \cite{KS13}. 
\bibliographystyle{amsalpha}
\bibliography{WCF}

\end{document}